\definecolor{darkgreen}{rgb}{0.,0.5,0.}
\newcommand{\la}{\lambda}
\numberwithin{equation}{section} \overfullrule 5pt
\newtheorem{thm}{Theorem}[section]
\newtheorem{cor}[thm]{Corollary}
\newtheorem{lem}[thm]{Lemma}
\newtheorem{ex}[thm]{Example}
\newtheorem{defi}[thm]{Definition}
\title[Polynomiality of certain average weights for oscillating tableaux]{Polynomiality of certain average weights for oscillating tableaux}
\date{September 1, 2017}
\author{Guo-Niu HAN and Huan XIONG}
\address{Universit\'e de Strasbourg, CNRS, IRMA UMR 7501, F-67000 Strasbourg, France}
\email{guoniu.han@unistra.fr, \quad xiong@math.unistra.fr}
\subjclass[2010]{05A15, 05A17, 05A19, 11P81}
\keywords{oscillating tableau, partition, Young diagram, Young's lattice}
\begin{document}
\begin{abstract}
We prove that  a family of average weights for oscillating tableaux are polynomials in two variables, namely, the length of the oscillating tableau and the size of the ending partition, which generalizes a result of Hopkins and Zhang. Several explicit and asymptotic formulas for the average weights are also derived.  
\end{abstract}

\maketitle


\section{introduction}
The basic knowledge on partitions and Young's lattice can be found in \cite{Macdonald,stanley2}.
A \emph{partition} is a finite weakly decreasing sequence of  positive
integers $\lambda = (a_1, a_2, \ldots, a_L)$, where
 $a_i \ (1\leq i\leq L)$ are called the parts of $\la$.
The integer  $|\lambda|=\sum_{1\leq i\leq L}a_i$ is called
the \emph{size} of  $\la$.  
Let $\mathbb{P}$ be the set of all partitions. The partition~$\lambda$ is identical with 
its \emph{Young diagram}, which is a collection of boxes arranged in left-justified rows with $a_i$ boxes in the $i$-th row.
A \emph{standard Young tableau}  of shape~$\la$ is a filling in the
boxes of the Young diagram of $\la$ with distinct numbers from $1$ to $|\la|$ such
that the numbers in each row and each column are increasing (see Figure~\ref{fig:2.1}).
 Equivalently,  a standard Young tableau of shape $\lambda$ can be seen as a sequence of partitions $T = (\lambda^{0},\lambda^{1},\ldots,\lambda^{l})$ such that $\lambda^{0} = \emptyset$, $\lambda^{l} = \lambda$; and $\la^{i+1}$ is obtained by adding a box to  $\lambda^i$ for $0\leq i \leq l-1$.  Denote by $f_\lambda$ the number of standard Young tableaux of shape
$\lambda$. In $1954$, Frame, Robinson and Thrall \cite{FRT} proved the following celebrated hook length formula, which shows that the number of standard Young tableaux of shape
$\lambda$ is determined by hook lengths of $\la$:
\begin{align}
f_\lambda = \frac{|\lambda| !}{H_{\lambda}},
\end{align}
where $H_{\lambda}$ is the product
of all hook lengths of boxes in the Young diagram of $\lambda$. Various proofs of the above hook length formula were given in \cite{bandlow,GNW,Kratt1995,NPS}.
\begin{figure}[htbp]
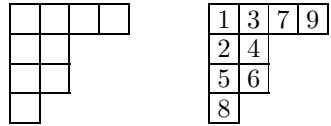

\begin{center}
\Yvcentermath1

\begin{tabular}{c}

$\yng(4,2,2,1)$
\ \ \ \ \ \ \ \
$\young(1379,24,56,8)$

\end{tabular}

\end{center}
\caption{The Young diagram of the partition $(4,2,2,1)$ and a standard Young tableau of shape  $(4,2,2,1)$.}
\label{fig:2.1}
\end{figure}

Oscillating tableaux are generalizations of standard Young tableaux. An \emph{oscillating tableau of shape $\lambda$ and length~$l$} is a sequence of partitions $T = (\lambda^{0},\lambda^{1},\ldots,\lambda^{l})$ such that $\lambda^{0} = \emptyset$, $\lambda^{l} = \lambda$; and $|\la^i/\la^{i+1}|=1$ or $|\la^{i+1}/\la^{i}|=1$ for each $0\leq i \leq l-1$, i.e., $\la^{i+1}$ is obtained by adding a box to or removing a box from $\lambda^i$. Therefore the oscillating tableau can be seen as a walk from $\emptyset$ to~$\la$ in Young's lattice (see \cite{stanley3}).  
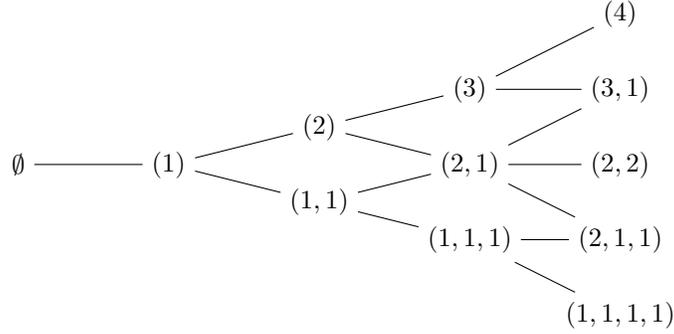
\begin{figure}

\begin{tikzpicture} 
  \node (ba) at (2,4.5) {$(2)$};
  \node (bb) at (2,3.5) {$(1,1)$};
  \node (ca) at (4,5) {$(3)$};
  \node (cb) at (4,4) {$(2,1)$};
  \node (cc) at (4,3) {$(1,1,1)$};
  
  \node (da) at (6,6) {$(4)$};
  \node (db) at (6,5) {$(3,1)$};
  \node (dc) at (6,4) {$(2,2)$};
  \node (dd) at (6,3) {$(2,1,1)$};
  \node (de) at (6,2) {$(1,1,1,1)$};

  \node (aa) at (0,4) {$(1)$};
  \node (min) at (-2,4) {$\emptyset$};
  \draw (min) -- (aa) -- (ba) -- (ca) -- (da); 
  \draw (aa) -- (bb) -- (cb)  -- (db);
  \draw (bb) -- (cc)  -- (dd);
  \draw (cc) -- (de);
  \draw (cb) -- (dc);
  \draw (cb) -- (dd);
  \draw (ca) -- (db);
  \draw (ba) -- (cb);

\end{tikzpicture}
\caption{The Young's lattice of partitions with sizes at most 4.}
\label{fig:2.2}
\end{figure}
Let $\mathcal{OT}(\lambda,l)$  be the set of oscillating tableaux of shape $\lambda$ and length~$l$.  The cardinality of $\mathcal{OT}(\lambda,l)$ is well-known as following.

\begin{thm}[\cite{roby,stanley1,stanley3,sundaram}]\label{th:r=0}
Let $\la\in \mathbb{P}$  and $k=|\lambda|$. 
Then for all $n \in \mathbb{N}$, we have
$$\#\mathcal{OT}(\lambda,k+2n) = \binom{k+2n}{k}\, (2n-1)!!\, f_{\lambda}.$$
On the other hand,  $\#\mathcal{OT}(\lambda,l) = 0$ if $l \neq k + 2n$ for any $n \in \mathbb{N}$.
\end{thm}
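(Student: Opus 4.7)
The plan is to prove the theorem by a bijection. If an oscillating tableau of shape $\lambda$ and length $l$ has $a$ up-steps (box additions) and $r$ down-steps (box removals), then $a+r=l$ and $a-r=|\lambda|=k$, so $l-k=2r$ must be a non-negative even integer; this immediately yields $\#\mathcal{OT}(\lambda,l)=0$ unless $l=k+2n$ for some $n\in\mathbb{N}$. For the case $l=k+2n$, I would construct a bijection between $\mathcal{OT}(\lambda,k+2n)$ and the set of triples $(A,M,T)$ in which $A\subset\{1,\ldots,k+2n\}$ has $2n$ elements, $M$ is a perfect matching of $A$ into $n$ unordered pairs, and $T$ is a standard Young tableau of shape $\lambda$ whose entries are the $k$ integers in $\{1,\ldots,k+2n\}\setminus A$. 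The number of such triples is
\[
\binom{k+2n}{2n}\cdot(2n-1)!!\cdot f_\lambda=\binom{k+2n}{k}(2n-1)!!\,f_\lambda,
\]
which is precisely the formula in the statement.

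The bijection $\Phi$ is a Sundaram/RSK-style algorithm. Read the oscillating tableau $(\emptyset=\lambda^{0},\lambda^{1},\ldots,\lambda^{k+2n}=\lambda)$ from left to right, maintaining a running standard Young tableau $P_i$ of shape $\lambda^i$. At step $i$, if $\lambda^i$ is obtained from $\lambda^{i-1}$ by adding a box $c$, place the label $i$ in $c$ to produce $P_i$. If $\lambda^i$ is obtained by removing a corner box $c$, perform reverse row insertion starting from the entry at $c$ in $P_{i-1}$; this ejects some earlier label $j<i$, the result is declared to be $P_i$, and the pair $\{j,i\}$ is added to the matching. After processing all $k+2n$ steps, $P_{k+2n}$ is a standard Young tableau of shape $\lambda$, its entries form the complement of the recorded matched set $A$, and the accumulated matching $M$ consists of exactly $n$ pairs.

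The main obstacle is verifying that $\Phi$ is bijective. The forward map is clearly well-defined, but constructing the inverse requires carefully undoing each step: at an unmatched position $i$ one simply removes the label $i$ from the current tableau (recording an up-step), and at a matched position $\{j,i\}$ with $j<i$ one replays the row insertion of $j$ to reintroduce a box and records that step $i$ removes the resulting inserted corner. Checking that the reconstructed sequence is a genuine walk in Young's lattice, and that the two procedures are mutually inverse, reduces to the invertibility of row insertion, which is standard in RSK theory. Once this technical verification is in place, the cardinality formula follows by simply counting the triples.
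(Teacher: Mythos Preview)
The paper does not give its own proof of this theorem; it is quoted from the literature, with the remark that bijective proofs appear in Sundaram's and Roby's theses and that Stanley gave a differential-poset proof. Your argument is precisely the Sundaram--Roby bijection the paper is citing, so the approach is correct and coincides with the referenced proof.

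One small imprecision in your description of the inverse: when you write ``at an unmatched position $i$ one simply removes the label $i$,'' this should be done at \emph{every} up-step index $i$, and those are not only the unmatched positions but also the smaller elements $j$ of the arcs in $M$ (each such $j$ was inserted into the running tableau when you processed its partner $i>j$, and must be deleted when the backward scan reaches step $j$). Once you say this, the check that every entry of $P_i$ is at most $i$---so that $i$ lies at a corner and can be removed---and the mutual invertibility of the two procedures follow from the standard invertibility of RSK row insertion, exactly as you indicate.
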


Bijective proofs of Theorem \ref{th:r=0} were given in \cite{roby,sundaram}. Another proof was obtained by Stanley \cite{stanley1,stanley3} in the study of differential posets.  The enumerations of various oscillating tableaux with restrictive conditions can be found in \cite{Bur2016,Bur2014,chen2,Kratt2016,Kratt1996,pak}.
In $2015$, Hopkins and Zhang \cite{HZ14} proved the following result on the average of certain weight function of oscillating tableaux. Their proof is motivated by Stanley's  theory of differential posets \cite{stanley1,stanley3}.  
\begin{thm}[\cite{HZ14}]\label{thm:HZ14}
Let $\la\in \mathbb{P}$  and $k=|\lambda|$. 
Then for all $n \in \mathbb{N}$,
$$
	\frac{1}{\# \mathcal{OT}(\lambda,k+2n)} \sum_{T \in \mathcal{OT}(\lambda,k+2n)} \mathrm{wt}(T) = (k+2n+1) \cdot \frac {3k+2n}{6},
$$
where ~$\mathrm{wt}(T) := \sum_{i=0}^{l} |\lambda^{i}|$ for each oscillating tableau $T = (\lambda^{0},\ldots,\lambda^{l})$.
\end{thm}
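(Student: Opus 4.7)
The plan is to work inside Stanley's operator algebra for Young's lattice. Let $V$ be the $\mathbb{Q}$-vector space with basis $\{e_\mu\}_{\mu \in \mathbb{P}}$, and introduce the up, down, and rank operators on $V$ defined by $U e_\mu = \sum_{|\nu/\mu|=1} e_\nu$, $D e_\mu = \sum_{|\mu/\nu|=1} e_\nu$, and $R e_\mu = |\mu| e_\mu$. Since Young's lattice is $1$-differential, $DU - UD = I$, and since $U$ raises and $D$ lowers rank by one, $[R, U] = U$ and $[R, D] = -D$. Setting $P := U + D$ and $Q := U - D$, a short computation yields the two identities $[R, P] = Q$ and $[P, Q] = 2I$.

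Let $\langle \cdot, \cdot \rangle$ be the inner product in which the $\{e_\mu\}$ are orthonormal. By linearity of $R$,
\begin{equation*}
\sum_{T \in \mathcal{OT}(\lambda, l)} \mathrm{wt}(T) = \sum_{i=0}^{l} \langle e_\lambda, P^{l-i} R P^i e_\emptyset \rangle.
\end{equation*}
Iterating $[R, P] = Q$ yields $R P^i = P^i R + \sum_{j=0}^{i-1} P^{i-1-j} Q P^j$, while iterating $[P, Q] = 2I$ yields $Q P^j = P^j Q - 2j\, P^{j-1}$. Applying these to $e_\emptyset$ and using $R e_\emptyset = D e_\emptyset = 0$ together with $Q e_\emptyset = e_{(1)}$, one obtains $P^{l-1-j} Q P^j e_\emptyset = P^{l-1} e_{(1)} - 2j\, P^{l-2} e_\emptyset$. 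Exchanging the orders of summation and evaluating the elementary identities $\sum_{j=0}^{l-1}(l-j) = l(l+1)/2$ and $\sum_{j=0}^{l-1} j(l-j) = l(l-1)(l+1)/6$, the sum telescopes to
\begin{equation*}
\sum_{T \in \mathcal{OT}(\lambda, l)} \mathrm{wt}(T) = \frac{l(l+1)}{2} \langle e_\lambda, P^{l-1} e_{(1)} \rangle - \frac{l(l-1)(l+1)}{3} \langle e_\lambda, P^{l-2} e_\emptyset \rangle.
\end{equation*}

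It remains to identify the two inner products. Any walk starting at $\emptyset$ must begin with an up step, so prepending $\emptyset$ establishes a bijection between walks from $(1)$ to $\lambda$ of length $l-1$ and elements of $\mathcal{OT}(\lambda, l)$; hence $\langle e_\lambda, P^{l-1} e_{(1)} \rangle = \#\mathcal{OT}(\lambda, l)$, while $\langle e_\lambda, P^{l-2} e_\emptyset \rangle = \#\mathcal{OT}(\lambda, l-2)$ by definition. Dividing by $\#\mathcal{OT}(\lambda, l)$ reduces the theorem to the ratio $\#\mathcal{OT}(\lambda, l-2)/\#\mathcal{OT}(\lambda, l)$, which by Theorem \ref{th:r=0} equals $2n/(l(l-1))$ when $l = k+2n$. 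Substituting and simplifying yields $(l+1)(3l - 4n)/6 = (k+2n+1)(3k+2n)/6$, as required.

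The main obstacle I anticipate is the careful bookkeeping of the nested commutator expansion and the combinatorial reinterpretation of $\langle e_\lambda, P^{l-1} e_{(1)} \rangle$; once the two commutation relations $[R,P]=Q$ and $[P,Q]=2I$ are in place, the whole calculation collapses to just two walk counts, both evaluated by Theorem \ref{th:r=0}.
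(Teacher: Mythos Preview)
Your proof is correct. The commutator calculus is accurate: $[R,P]=Q$, $[P,Q]=2I$, the iterated expansions, the double-sum evaluation, the bijection giving $\langle e_\lambda, P^{l-1}e_{(1)}\rangle=\#\mathcal{OT}(\lambda,l)$, and the ratio from Theorem~\ref{th:r=0} all check out. (One cosmetic point: when $l\le 1$ the expressions $P^{l-1}e_{(1)}$ or $P^{l-2}e_\emptyset$ are formally undefined, but the coefficients $l(l+1)/2$ and $l(l-1)(l+1)/3$ vanish there, and the identity is trivially verified directly.)

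Your route, however, is \emph{not} the one taken in this paper. Your argument works entirely inside Stanley's differential-poset operator algebra, collapsing the weighted sum via commutators to a linear combination of two walk counts; this is essentially the original Hopkins--Zhang strategy that the paper cites as motivation. The paper instead derives Theorem~\ref{thm:HZ14} as the special case $P(x,y)=x$ of its main Theorem~\ref{th: main_size}: one defines the linear operator $\Psi$ on $\mathbb{R}[x,y]$, shows it is a degree-preserving bijection (Lemma~\ref{th:isopolynomial}), proves by induction on the length (using Lemma~\ref{th:la+-}) that the average of $\mathrm{wt}_P$ equals $(|\lambda|+2n+1)\,\Psi^{-1}(P(x,x+2y))$ evaluated at $(|\lambda|,|\lambda|+2n)$, and then simply computes $\Psi^{-1}(x)=(2x+y)/6$. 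Your approach is more direct and self-contained for this single statement; the paper's approach is less explicit for any fixed $P$ but yields polynomiality for \emph{all} polynomial weights simultaneously, which is the paper's real goal.
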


As noted by Hopkins and Zhang \cite{HZ14}, it is surprising that the above average in Theorem ~\ref{thm:HZ14}  is a polynomial of $n$ and $|\la|$.  
In this paper, we generalize Theorem  \ref{thm:HZ14} and show that this polynomiality holds for a family of weight functions for oscillating tableaux. The following is our main result.
\begin{thm} \label{th: main_size}
Let $P(x,y)$ be a given polynomial of two variables $x$ and $y$. For each oscillating tableau $T = (\lambda^{0},\ldots,\lambda^{l})$, let
$\mathrm{wt}_P(T) := \sum_{i=0}^{l}P(|\la^i|,i)$.
Then there exists a polynomial $Q(x,y)$ with the same degree and constant term as $P(x,y)$ such that
\begin{align}\label{eq: Main_size}
\frac1{\# \mathcal{OT}(\lambda,|\lambda|+2n)} \sum_{T \in \mathcal{OT}(\lambda,|\lambda|+2n)} \mathrm{wt}_P(T) =  (|\la|+2n+1)\, Q(|\la|,|\la|+2n)
\end{align} 
for any $n\in \mathbb{N}$ and $\la\in \mathbb{P}$.
\end{thm}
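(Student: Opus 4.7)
The plan is to exploit a bijective decomposition of oscillating tableaux (essentially that of Sundaram, Roby, and Chen--Deng--Du--Stanley--Yan) and to carry out an explicit moment computation. In this bijection an element of $\mathcal{OT}(\lambda,l)$ corresponds to a triple $(T, A, M)$: $T$ is a standard Young tableau of shape $\lambda$; $A \subset \{1,\ldots,l\}$ is a $k$-subset indexing the ``permanent'' up-step positions (labelled by $T$); and $M$ is a perfect matching of $\{1,\ldots,l\}\setminus A$ in which each pair $\{u,d\}$ with $u<d$ encodes an ``ephemeral'' box added at step $u$ and removed at step $d$. The count $\binom{l}{k}(2n-1)!!\, f_\lambda$ of such triples matches Theorem~\ref{th:r=0}. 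Tracking boxes yields
\begin{align*}
|\lambda^i| \;=\; |A\cap [1,i]| \;+\; \#\bigl\{\{u,d\}\in M \colon u\le i< d\bigr\},
\end{align*}
so the tableau $T$ decouples from $|\lambda^i|$ and the expectation of $|\lambda^i|^a$ depends only on $i$, $k$, and $l$.

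By linearity it suffices to treat the monomial case $P(x,y)=x^a y^b$. Setting $X:=|A\cap [1,i]|$ (hypergeometrically distributed) and $Y:=\#\{\text{crossing pairs of } M\}$, the joint factorial moments take the explicit closed forms
\begin{align*}
\mathbb{E}\bigl[X^{\underline r}\bigr] = \frac{k^{\underline r}\, i^{\underline r}}{l^{\underline r}},\qquad
\mathbb{E}\bigl[Y^{\underline s}\mid X\bigr]=\frac{(i-X)^{\underline s}\,(l-i-k+X)^{\underline s}}{\prod_{t=0}^{s-1}(2n-1-2t)},
\end{align*}
with $2n=l-k$. Expanding $(X+Y)^a$ in products of falling factorials, taking expectations, multiplying by the summand $i^b$, and finally summing over $i=0,\ldots,l$ using Faulhaber's polynomial formulas for $\sum_{i=0}^l i^r$ yields an explicit rational-function expression in $k$ and $l$ for the average appearing in \eqref{eq: Main_size}.

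The hard part is verifying that this rational function is in fact a polynomial. Individual terms $\mathbb{E}[|\lambda^i|^a]$ are only rational (for example, with $\lambda=\emptyset$ one computes $\mathbb{E}[|\lambda^i|] = i(l-i)/(l-1)$), so the polynomial identity emerges only after systematic cancellation. I would identify the common denominator as an explicit product of linear factors in $l$ and $l-k$ that does not depend on $i$, and then prove that the numerator produced by the $i$-sum is divisible by each such factor, by exhibiting extra vanishing at the relevant specialisations. Once polynomiality is established, the factor $(l+1)$ in \eqref{eq: Main_size} is automatic, since the polynomial $F(l):=\sum_{i=0}^l \phi(i)$ satisfies $F(-1)=0$ for every polynomial $\phi$. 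The degree and constant-term assertions on $Q$ then follow easily: evaluating at $(k,l)=(0,0)$, the unique length-$0$ tableau forces $Q(0,0)=P(0,0)$, while the leading-order behaviour $X\sim ki/l$ and $Y\sim i(l-i)/(l-1)$ gives $\deg Q = a+b = \deg P$.
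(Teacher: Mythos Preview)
Your probabilistic approach through the Sundaram--Roby--CDDSY bijection is genuinely different from the paper's, and the moment formulas you write for $X$ and $Y$ are correct. But there is a real gap exactly where you yourself say ``the hard part'' lies: you never establish that the resulting rational function in $(k,l)$ is a polynomial. The denominators produced by your computation are products of factors $l(l-1)\cdots(l-m+1)$ (from the hypergeometric moments of $X$) together with $(l-k-1)(l-k-3)\cdots$ (from the matching moments of $Y$), and these pile up as $a=\deg_x P$ grows. Saying you ``would \ldots\ exhibit extra vanishing at the relevant specialisations'' is a plan, not an argument: the specialisations $l=j$ for small $j$, or $l-k$ odd, do not correspond to actual oscillating-tableau data, so there is no combinatorial interpretation available to force the numerator to vanish there, and you offer no uniform algebraic mechanism either. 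The single worked cancellation $\sum_i i(l-i)/(l-1)=l(l+1)/6$ gives no indication of how the general case goes. Your arguments for the factor $(l+1)$ and for the constant term are fine \emph{once} polynomiality is in hand, and the degree claim is only heuristic; without polynomiality nothing is proved.

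By contrast, the paper sidesteps this obstacle entirely. It introduces a linear operator $\Psi$ on $\mathbb{R}[x,y]$, shows via an explicit upper-triangular change of basis that $\Psi$ restricts to a bijection on each $\mathbb{R}_r[x,y]$ preserving degree and constant term, and then simply \emph{defines} $Q:=\Psi^{-1}\bigl(P(x,x+2y)\bigr)$. Identity~\eqref{eq: Main_size} is verified by induction on the length $l=k+2n$: peeling off the last partition $\lambda^{l-1}\in\Omega^{\pm}(\lambda)$ and applying Lemma~\ref{th:la+-} reduces length $l$ to length $l-1$, and $\Psi$ is tailored precisely so that this recursion closes. Polynomiality is thus built in from the outset rather than extracted a posteriori from a rational expression.
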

Theorem \ref{th: main_size} tells us that the average of the weight function $\mathrm{wt}_P(T)$ is a polynomial of $n$ and $|\la|$ with degree $\deg(P)+1$.  
In fact, the polynomial $Q(x,y)$ in Theorem \ref{th: main_size} equals
$\Psi^{-1}\bigl(P(x,x+2y)\bigr)$, where $\Psi$ is an operator given in Definition~\ref{def:psi}. Usually, it is not easy to compute the explicit formula for $Q(x,y)$ when $P(x,y)$ is given except for some special cases that we will describe below. 
For example, let $\la=\emptyset$ and $P(x,y)=\binom xr $ where $r\in\mathbb{N}$, 
we can derive the following explicit formulas. 
\begin{cor}\label{th:empty_partition}
For any $n,r\in \mathbb{N}$, we have 
\begin{align}\label{eq:size_formula}
\frac{2^n\, n!}{(2n+1)!}\ \sum_{ (\lambda^{0},\la^1,\ldots,\lambda^{2n}) \in \mathcal{OT}(\emptyset,2n)} \ 
\sum_{i=0}^{2n}\binom{|\la^i|}{r}
=\frac{2^r\, r!^2}{(2r+1)!} \binom{n}{r}.
\end{align} 
\end{cor}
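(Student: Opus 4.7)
The plan is to combine the polynomiality from Theorem~\ref{th: main_size} with direct evaluation at a small number of sample points. Specializing Theorem~\ref{th: main_size} with $P(x,y)=\binom{x}{r}$ and $\la=\emptyset$ (so $|\la|=0$), the average weight equals $(2n+1)\,Q(0,2n)$ for some polynomial $Q(x,y)$ of degree $r$. Meanwhile, by Theorem~\ref{th:r=0}, $\#\mathcal{OT}(\emptyset,2n)=(2n-1)!!=(2n)!/(2^n n!)$, so the normalizing factor in the corollary satisfies
\begin{equation*}
\frac{2^n n!}{(2n+1)!}\,\#\mathcal{OT}(\emptyset,2n) \;=\; \frac{1}{2n+1}.
\end{equation*}
Hence the left-hand side of \eqref{eq:size_formula} equals $Q(0,2n)$, which is a polynomial in $n$ of degree at most $r$.

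Next I would show that this polynomial vanishes for $n=0,1,\dots,r-1$. For a walk $(\la^0,\dots,\la^{2n})\in\mathcal{OT}(\emptyset,2n)$, the size $|\la^i|$ satisfies $|\la^i|\le\min(i,2n-i)\le n$, since each step changes the size by $\pm 1$ and the walk starts and ends at $\emptyset$. When $n<r$ this forces $|\la^i|<r$ for every $i$, hence $\binom{|\la^i|}{r}=0$ term by term, and the whole left-hand side of \eqref{eq:size_formula} vanishes.

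Finally I would compute the value at $n=r$. The inequality above together with $\binom{|\la^i|}{r}\ne 0$ forces $|\la^i|=r$ and therefore $i=r$, so only the middle term contributes and it contributes $1$. The walks with $|\la^r|=r$ are exactly those that go strictly upward for the first $r$ steps and strictly downward for the last $r$ steps; such a walk is determined by a pair (SYT of shape $\mu$, SYT of shape $\mu$) for $\mu\vdash r$, and their total number is $\sum_{\mu\vdash r}f_\mu^2=r!$. Therefore the left-hand side of \eqref{eq:size_formula} at $n=r$ equals
\begin{equation*}
\frac{2^r r!}{(2r+1)!}\cdot r! \;=\; \frac{2^r (r!)^2}{(2r+1)!}.
\end{equation*}

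To conclude, the left-hand side is a polynomial in $n$ of degree at most $r$ that vanishes at $n=0,1,\dots,r-1$, so it has the form $c\binom{n}{r}$. Evaluating at $n=r$ gives $c=\frac{2^r (r!)^2}{(2r+1)!}$, which matches the right-hand side of \eqref{eq:size_formula}. The only step that required more than bookkeeping was identifying that the extremal walks (with $|\la^r|=r$) are counted by $\sum_\mu f_\mu^2=r!$; apart from that, the polynomiality supplied by Theorem~\ref{th: main_size} does essentially all of the work, since it reduces the corollary to checking $r+1$ special values.
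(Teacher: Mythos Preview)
Your argument is correct and follows essentially the same route as the paper: apply Theorem~\ref{th: main_size} with $P(x,y)=\binom{x}{r}$ and $\lambda=\emptyset$ to see that the left-hand side is a polynomial in $n$ of degree at most $r$, observe it vanishes for $0\le n\le r-1$, and evaluate at $n=r$ via $\sum_{|\mu|=r} f_\mu^2=r!$. The only difference is that you spell out the bound $|\lambda^i|\le\min(i,2n-i)\le n$ and the identification of the extremal walks explicitly, whereas the paper declares these steps obvious.
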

Also, by letting  $P(x,y)=x$  in Theorem \ref{th: main_size}, we derive  Theorem \ref{thm:HZ14}.  For each oscillating tableau $T = (\lambda^{0},\ldots,\lambda^{l})$, let
$$
\mathrm{wt}_{a,b}(T) := \sum_{i=0}^{l}|\la^i|^a\cdot i^b
$$  
where $a,b\in \mathbb{N}$. Then, we obtain the following corollaries for the weight functions $P(x,y)=x^2$ and $xy$ respectively.
\begin{cor} \label{th:r=2}
Let $\la\in \mathbb{P}$  and $k=|\lambda|$.  Then for any $n \in \mathbb{N}$, we have
$$  
\frac{1}{\# \mathcal{OT}(\lambda,k+2n)} \sum_{T \in \mathcal{OT}(\lambda,k+2n)} \mathrm{wt}_{2,0}(T) = (k+2n+1)\cdot\frac{10k^2+4n^2+10kn+5k+6n}{30}.
$$
\end{cor}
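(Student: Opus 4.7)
The plan is to derive Corollary~\ref{th:r=2} as a direct specialization of Theorem~\ref{th: main_size} to the weight polynomial $P(x,y) = x^2$. Since $P$ has degree $2$ and vanishing constant term, Theorem~\ref{th: main_size} immediately guarantees the existence of a polynomial $Q(x,y)$ of degree $2$ with zero constant term such that the average on the left equals $(k+2n+1)\, Q(k, k+2n)$. Writing $Q(x,y) = Ax^2 + Bxy + Cy^2 + Dx + Ey$ reduces the corollary to the determination of the five unknown coefficients.

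The cleanest way to compute these coefficients is to apply the explicit description $Q(x,y) = \Psi^{-1}\bigl(P(x, x+2y)\bigr)$ stated right after Theorem~\ref{th: main_size}. Since $P$ does not depend on its second argument, $P(x, x+2y) = x^2$, and it remains only to apply the inverse of the operator $\Psi$ of Definition~\ref{def:psi} to the single monomial $x^2$. A more pedestrian alternative, independent of $\Psi$, is to evaluate the left-hand side of~(\ref{eq: Main_size}) at five well-chosen pairs $(\lambda, n)$ and solve a linear system: for instance, the cases $(\lambda, n) \in \{(\emptyset,0), (\emptyset,1), (\emptyset,2), ((1),0), ((1),1)\}$ can all be handled by direct enumeration of the (very small) sets $\mathcal{OT}(\lambda, |\lambda|+2n)$ together with a by-hand computation of $\sum_{i} |\lambda^i|^2$ for each element. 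A consistency check via Theorem~\ref{thm:HZ14} (taking $P(x,y) = x$) and via Corollary~\ref{th:empty_partition} with $r = 2$, combined with the identity $x^2 = 2\binom{x}{2} + x$, further anchors the answer along the line $x = 0$.

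Once $Q$ is pinned down, substituting $y = k + 2n$ in $(k+2n+1)\, Q(k, k+2n)$ converts the result to the explicit closed form stated in the corollary, finishing the proof. The main obstacle is the actual computation of $Q$: the elegant route requires unpacking the definition of $\Psi$ and performing the inversion on $x^2$, while the concrete route requires careful bookkeeping of a small enumeration. Both approaches are mechanical once set up, but the former has the advantage of reusing the machinery of the main theorem and extends verbatim to the next corollary (for the weight $P(x,y) = xy$), whereas the latter has the advantage of being self-contained and verifiable by hand.
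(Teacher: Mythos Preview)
Your proposal is correct and follows essentially the same approach as the paper: specialize Theorem~\ref{th: main_size} to $P(x,y)=x^2$, compute $Q(x,y)=\Psi^{-1}(x^2)$ by evaluating $\Psi$ on the degree-$\leq 2$ monomials and inverting, and then substitute $y=k+2n$. The paper does exactly this (obtaining $Q(x,y)=\frac{6x^2+3xy+y^2+2x+3y}{30}$), while your alternative enumeration route is a legitimate side-check but not the paper's method.
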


\begin{cor} \label{th:xy}
Let $\la\in \mathbb{P}$  and $k=|\lambda|$.  Then for any $n \in \mathbb{N}$, 
$$  
\frac{1}{\# \mathcal{OT}(\lambda,k+2n)} \sum_{T \in \mathcal{OT}(\lambda,k+2n)} \mathrm{wt}_{1,1}(T) = (k+2n+1)\cdot\frac{2k^2+2n^2+5kn+k}{6}.
$$
\end{cor}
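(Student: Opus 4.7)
The plan is to derive this as a direct specialization of Theorem \ref{th: main_size} with the weight polynomial $P(x,y) = xy$, and then identify the resulting polynomial $Q(x,y)$ explicitly. Since $P(x,y) = xy$ has degree $2$ and constant term $0$, Theorem \ref{th: main_size} immediately produces a polynomial $Q(x,y)$ of degree at most $2$ with zero constant term such that
$$\frac{1}{\#\mathcal{OT}(\lambda,k+2n)} \sum_{T \in \mathcal{OT}(\lambda,k+2n)} \mathrm{wt}_{1,1}(T) = (k+2n+1)\, Q(k,k+2n).$$
Thus the corollary reduces to verifying that $Q(k,k+2n) = \frac{2k^2+2n^2+5kn+k}{6}$. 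Setting $x = k$ and $y = k+2n$ (so that $n=(y-x)/2$), this rewrites as $Q(x,y) = \frac{y(3x+y)+2x}{12}$, which is indeed a polynomial of degree $2$ in $(x,y)$ with zero constant term, consistent with the shape Theorem \ref{th: main_size} dictates.

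To pin down $Q$, I would pursue one of two routes. The first is to compute $Q(x,y) = \Psi^{-1}(P(x,x+2y)) = \Psi^{-1}\bigl(x(x+2y)\bigr)$ using the operator $\Psi$ of Definition \ref{def:psi} (invoked here from what follows the excerpt); this is the formally cleanest method, but requires inverting $\Psi$ on a small polynomial. The second, which avoids $\Psi$ altogether, exploits the fact that $Q(x,y)$ has only five unknown coefficients (those of $x$, $y$, $x^2$, $xy$, $y^2$). Evaluating the left-hand side of the identity on five conveniently chosen small pairs $(\lambda,n)$---for instance $(\emptyset,1),(\emptyset,2),((1),1),((1),2),((2),1)$---using Theorem \ref{th:r=0} for the denominator and direct enumeration of oscillating tableaux for the numerator yields a linear system that determines $Q$ uniquely.

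Either route terminates in the same check: the five coefficients of $Q$ match those of $\frac{y(3x+y)+2x}{12}$, after which Theorem \ref{th: main_size} upgrades the finite-case verification to the full identity for all $\lambda$ and $n$. The only slightly delicate step is the hand-enumeration in the second route; it is tedious but entirely mechanical, and the polynomiality guaranteed by Theorem \ref{th: main_size} is what turns an a priori infinite family of identities into a finite linear-algebra problem.
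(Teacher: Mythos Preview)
Your proposal is correct, and your first route is exactly what the paper does: it evaluates $\Psi$ on the degree-$\le 2$ monomials, solves the resulting linear system to find $\Psi^{-1}\bigl(x(x+2y)\bigr)=\frac{3xy+y^2+2x}{12}$ (your target $Q$), and then substitutes $(x,y)=(k,k+2n)$ via Theorem~\ref{th: main_size}. Your alternative route~(b) through five explicit evaluations would also succeed (the chosen points give a nonsingular system), but the paper does not use it.
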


For the general weight function $\mathrm{wt}_{i,j}(T)$, we can derive the following asymptotic formulas of their averages. 

\begin{thm} \label{th: asy}
Let  $i,j\in\mathbb{N}$. For a fixed nonnegative integer $n$, we have  
\begin{align}\label{eq: asy1}
\frac1{\# \mathcal{OT}(\lambda,|\lambda|+2n)} \sum_{T \in \mathcal{OT}(\lambda,|\lambda|+2n)} \mathrm{wt}_{i,j}(T)\ \sim\  \frac {|\la|^{i+j+1}}{i+j+1}
\end{align} 
when $|\la|\rightarrow \infty$.

For a fixed partition $\la$, we have 
\begin{align}\label{eq: asy2}
\frac1{\# \mathcal{OT}(\lambda,|\lambda|+2n)} \sum_{T \in \mathcal{OT}(\lambda,|\lambda|+2n)} \mathrm{wt}_{i,j}(T)\ \sim\  \frac {i!(i+j)!(2n)^{i+j+1}}{(2i+j+1)!}
\end{align} 
when $n\rightarrow \infty$.
\end{thm}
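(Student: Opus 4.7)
My plan is to use Theorem \ref{th: main_size} to reduce both asymptotic claims to computing the leading coefficients of a single bivariate polynomial. Applying Theorem \ref{th: main_size} with $P(x, y) = x^i y^j$ (of degree $i+j$), there is a polynomial $Q$ of degree $i+j$ such that
\[
R_{i,j}(|\lambda|, n) := \frac{1}{\#\mathcal{OT}(\lambda, |\lambda|+2n)} \sum_{T \in \mathcal{OT}(\lambda, |\lambda|+2n)} \mathrm{wt}_{i,j}(T) = (|\lambda|+2n+1)\,Q(|\lambda|, |\lambda|+2n).
\]
This is a polynomial in $|\lambda|$ and $n$ of total degree at most $i+j+1$. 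By the total-degree bound the coefficient of $|\lambda|^{i+j+1}$ in $R_{i,j}$ cannot be multiplied by any positive power of $n$; it is therefore a constant independent of $n$. Similarly the coefficient of $n^{i+j+1}$ is a constant independent of $|\lambda|$. Identifying these two constants is exactly what \eqref{eq: asy1} and \eqref{eq: asy2} require.

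For \eqref{eq: asy1} I specialize $n = 0$: the set $\mathcal{OT}(\lambda, |\lambda|)$ then collapses to the set of standard Young tableaux of shape $\lambda$, on each of which $|\lambda^{i'}| = i'$ deterministically. Hence $\mathrm{wt}_{i,j}(T) = \sum_{i'=0}^{|\lambda|} (i')^{i+j}$ is independent of $T$, and Faulhaber's formula gives $R_{i,j}(|\lambda|, 0) = |\lambda|^{i+j+1}/(i+j+1) + O(|\lambda|^{i+j})$; the $|\lambda|^{i+j+1}$-coefficient is therefore $1/(i+j+1)$.

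For \eqref{eq: asy2} I specialize $\lambda = \emptyset$, reducing the problem to computing the top coefficient in $n$ of the one-variable polynomial $R_{i,j}(0, n)$. Decomposing $\mathcal{OT}(\emptyset, 2n)$ by the intermediate shape $\mu$ at step $i'$, applying Theorem \ref{th:r=0} to both halves of the walk, and using the classical identity $\sum_{\mu \vdash k} f_\mu^2 = k!$, one obtains
\[
\sum_{T \in \mathcal{OT}(\emptyset, 2n)} |\lambda^{i'}|^i \,=\, \sum_{k} k^i\, k!\, \binom{i'}{k}\binom{2n-i'}{k}\,(i'-k-1)!!\,(2n-i'-k-1)!!,
\]
where $k$ ranges over non-negative integers with $k \equiv i' \pmod 2$ and $k \leq \min(i', 2n-i')$. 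Multiplying by $(i')^j$, summing over $i'$, and dividing by $\#\mathcal{OT}(\emptyset, 2n) = (2n-1)!!$ yields an explicit finite double sum equal to $R_{i,j}(0, n)$.

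The main obstacle is the asymptotic extraction of the $n^{i+j+1}$-coefficient from this double sum. The picture behind the prediction is that, for $\lambda = \emptyset$, the rescaled process $X_t := |\lambda^{\lfloor 2nt\rfloor}|/n$ concentrates at the deterministic curve $2t(1-t)$ as $n \to \infty$ (consistent with Corollary \ref{th:r=2} at the level of second moments), so that
\[
R_{i,j}(0, n) \sim 2^{j+1} n^{i+j+1}\!\int_0^1 t^j \bigl(2t(1-t)\bigr)^i\, dt = 2^{i+j+1} n^{i+j+1} B(i+j+1, i+1) = \frac{i!(i+j)!\cdot 2^{i+j+1}}{(2i+j+1)!}\, n^{i+j+1}
\]
via the Beta identity $B(a,b) = (a-1)!(b-1)!/(a+b-1)!$. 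To make this rigorous one can either apply Stirling's approximation and a Laplace/saddle-point argument to the double sum, or proceed algebraically using Corollary \ref{th:empty_partition}: that corollary handles the case $j=0$ directly (expand $|\lambda^{i'}|^i$ in the binomial basis $\{\binom{|\lambda^{i'}|}{r}\}_{r \geq 0}$ via Stirling numbers of the second kind and isolate the $r = i$ term to get $\frac{i!^2 \cdot 2^{i+1}}{(2i+1)!}\, n^{i+1}$, matching the prediction), and the general case should follow by incorporating $(i')^j$ through an analogous binomial expansion.
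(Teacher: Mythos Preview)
Your argument for \eqref{eq: asy1} is correct and considerably slicker than the paper's. The paper never specializes $n=0$; instead it works directly with the matrix $M_r$ representing $\Psi$ in the bases $\{\alpha_{r,i}=x^iy^{r-i}\}$ and $\{\beta_{r,i}=x^i(x+2y)^{r-i}\}$, derives a recursion for certain entries of $M_r^{-1}$, and shows $\sum_{i'}m^{i+j,i}_{i+j,i'}=\tfrac{1}{i+j+1}$, which is exactly the leading coefficient you need. Your specialization to $n=0$, where every oscillating tableau is an SYT and $|\lambda^{i'}|=i'$ deterministically, bypasses all of this: Faulhaber plus the total-degree bound from Theorem~\ref{th: main_size} finish the job in two lines.

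For \eqref{eq: asy2}, however, your proposal has a genuine gap. Your reduction to $\lambda=\emptyset$ is fine, and your double-sum identity for $\sum_T |\lambda^{i'}|^i$ is correct. But from there you offer only a heuristic limit-shape picture and then say ``the general case should follow by incorporating $(i')^j$ through an analogous binomial expansion.'' Corollary~\ref{th:empty_partition} gives you the average of $\sum_{i'}\binom{|\lambda^{i'}|}{r}$, not of $\sum_{i'}(i')^j\binom{|\lambda^{i'}|}{r}$, so it only handles $j=0$; and Example~\ref{ex:yr} symmetrically handles $i=0$. For mixed $(i,j)$ neither applies, and there is no evident way to separate the factor $(i')^j$ from the average without essentially redoing the analysis. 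The paper closes this gap by a different route entirely: it sets up a recursion $(r+i+1)m^{r,i}_{r,0}=i\,m^{r,i-1}_{r,0}$ for the bottom-row entries of $M_r^{-1}$, solves it to get $m^{r,i}_{r,0}=\dfrac{r!\,i!}{(r+i+1)!}$, and reads off the $(2n)^{i+j}$-coefficient of $\Psi^{-1}(\beta_{i+j,i})$ directly. To complete your approach you would either need to prove the concentration statement $|\lambda^{\lfloor 2nt\rfloor}|/n\to 2t(1-t)$ with enough uniformity to exchange limit and sum, or carry out the Laplace analysis of your explicit double sum, neither of which you have done.
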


The main idea in the proof of our results is to  study the operator~$\Psi$ given in Definition \ref{def:psi}, which is shown to be a bijection  from the set of real coefficient polynomials with two parameters to itself.

\section{Proofs of main results} \label{sec:size}
In this section, we prove Theorem \ref{th: main_size} and the three corollaries stated in the introduction.
Let $\la$ be a partition. Denote by $\Omega^+(\la)$ (resp. $\Omega^-(\la)$) the set of partitions $\lambda^{+}$ (resp. $\lambda^-$)  obtained by adding (resp. removing) a box to (resp. from) $\lambda$:
$$
\Omega^+(\la):=\{  \la^+: |\la^+/\la|=1   \}
$$
and 
$$
\Omega^-(\la):=\{  \la^-: |\la/\la^-|=1   \}.
$$

 \medskip
 
\begin{ex} For the partition $\la=(5,2,2,1)$, we have 
$$
\Omega^+(\la):=\{  (6,2,2,1),  (5,3,2,1), (5,2,2,2),(5,2,2,1,1)   \}
$$
and 
$$
\Omega^-(\la):=\{  (4,2,2,1), (5,2,1,1), (5,2,2)   \}.
$$
\end{ex}

We need the following lemma. 
\begin{lem} \label{th:la+-}
Let $\la\in \mathbb{P}$  and $k=|\lambda|$.  For any $n\in \mathbb{N}$ with $k+2n>0$ we have
$$
\frac{\sum_{\lambda^+\in\ \Omega^+(\la)}\#\mathcal{OT}(\lambda^+,k+2n-1)}{\#\mathcal{OT}(\lambda,k+2n)}=\frac{2n}{k+2n}
$$
and
$$
\frac{\sum_{\lambda^- \in\ \Omega^-(\la)}\#\mathcal{OT}(\lambda^-,k+2n-1)}{\#\mathcal{OT}(\lambda,k+2n)}=\frac{k}{k+2n}.
$$
\end{lem}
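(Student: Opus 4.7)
My approach is to derive the lemma as a direct consequence of Theorem \ref{th:r=0} combined with the elementary identity $\sum_{\la^-\in\Omega^-(\la)}f_{\la^-}=f_\la$.

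First, I would observe that conditioning an oscillating tableau in $\mathcal{OT}(\la,k+2n)$ on the penultimate partition $\la^{k+2n-1}$, which must belong to either $\Omega^+(\la)$ (if the final step removes a box) or $\Omega^-(\la)$ (if the final step adds one), gives the decomposition
$$\#\mathcal{OT}(\la,k+2n)=\sum_{\la^+\in\Omega^+(\la)}\#\mathcal{OT}(\la^+,k+2n-1)+\sum_{\la^-\in\Omega^-(\la)}\#\mathcal{OT}(\la^-,k+2n-1).$$
So the two asserted ratios sum to $1$, and it suffices to prove just one of them.

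I would focus on the $\Omega^-(\la)$ side. For $\la^-\in\Omega^-(\la)$, Theorem \ref{th:r=0} applied with shape of size $k-1$ and length $(k-1)+2n$ gives $\#\mathcal{OT}(\la^-,k+2n-1)=\binom{k+2n-1}{k-1}(2n-1)!!\,f_{\la^-}$. Dividing by the analogous expression for $\#\mathcal{OT}(\la,k+2n)$ and using the binomial simplification $\binom{k+2n-1}{k-1}/\binom{k+2n}{k}=k/(k+2n)$ yields
$$\frac{\#\mathcal{OT}(\la^-,k+2n-1)}{\#\mathcal{OT}(\la,k+2n)}=\frac{k}{k+2n}\cdot\frac{f_{\la^-}}{f_\la}.$$
Summing over $\la^-\in\Omega^-(\la)$ and invoking the identity $\sum_{\la^-\in\Omega^-(\la)}f_{\la^-}=f_\la$, which holds because deleting the box labeled $k$ from a standard Young tableau of shape $\la$ furnishes a bijection onto the union of standard Young tableaux of shapes in $\Omega^-(\la)$, produces the second ratio $k/(k+2n)$. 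The first ratio $2n/(k+2n)$ then follows from the decomposition above.

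No serious obstacle arises; the calculation is routine bookkeeping once the penultimate-step decomposition is in place. The only points requiring attention are the boundary cases: when $n=0$ the $\Omega^+$-sum vanishes because each summand is zero by the constraint in Theorem \ref{th:r=0} (the length $k-1$ cannot equal $|\la^+|+2n'=(k+1)+2n'$ for any $n'\in\mathbb{N}$), matching $2n/(k+2n)=0$; and when $\la=\emptyset$ the set $\Omega^-(\la)$ is empty, matching $k/(k+2n)=0$. Both boundary cases are consistent with the stated formulas.
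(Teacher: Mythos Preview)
Your proof is correct and follows essentially the same route as the paper: apply Theorem~\ref{th:r=0} and simplify the binomial ratio, then invoke the identity $\sum_{\la^-\in\Omega^-(\la)}f_{\la^-}=f_\la$. The only minor difference is that the paper handles both ratios directly (also using $\sum_{\la^+\in\Omega^+(\la)}f_{\la^+}=(k+1)f_\la$), whereas you deduce the $\Omega^+$-ratio from the penultimate-step decomposition; this is a cosmetic shortcut rather than a different argument.
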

\begin{proof} For $\lambda^+\in\ \Omega^+(\la)$ and $\lambda^-\in\ \Omega^-(\la)$,
by Theorem \ref{th:r=0}, we obtain 
$$
\frac{\#\mathcal{OT}(\lambda^+,k+2n-1)}{\#\mathcal{OT}(\lambda,k+2n)}=\frac{2n}{k+2n}\frac{f_{\la^+}}{(k+1)f_{\la}}
$$
and
$$
\frac{\#\mathcal{OT}(\lambda^-,k+2n-1)}{\#\mathcal{OT}(\lambda,k+2n)}=\frac{k}{k+2n}\frac{f_{\la^-}}{f_{\la}}.
$$
But it is well known that $\sum_{\lambda^+\in\ \Omega^+(\la)}f_{\la^+}=(k+1)f_\la$  and $\sum_{\lambda^-\in\ \Omega^-(\la)}f_{\la^-}=f_\la$ (for example, see \cite[Lemmas $2.2$ and $2.3$]{hanxiong1}). Then the proof is complete. 
\end{proof}

Let $\mathbb{R}[x,y]$ be the set of polynomials of $x$ and $y$ with real coefficients. For each nonnegative integer $r$,  let $\mathbb{R}_r[x,y]$ be the set of polynomials in $\mathbb{R}[x,y]$ with degrees at most $r$.
\begin{defi}\label{def:psi}
The operator $\Psi : \mathbb{R}[x,y]\rightarrow \mathbb{R}[x,y]$ is defined by 
$$
\Psi\left(A(x,y)\right):=(x+2y+1)A(x,x+2y)-xA(x-1,x+2y-1)-2yA(x+1,x+2y-1)
$$
for each polynomial $A(x,y)\in \mathbb{R}[x,y]$.
\end{defi}

\begin{lem} \label{th:isopolynomial}
Let $r\in \mathbb{N}$. The operator $\Psi$ provides a bijection between $\mathbb{R}_r[x,y]$ and itself.
Furthermore, $A(x,y)$ and $\Psi\left(A(x,y)\right)$ have the same degree and constant term for each polynomial $A(x,y)\in \mathbb{R}[x,y]$.
\end{lem}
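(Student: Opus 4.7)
My plan is to make a linear change of variables that simplifies $\Psi$ and then analyze the resulting operator degree by degree. Set $u := x$ and $v := x+2y$; this is an invertible $\mathbb{R}$-linear change of coordinates that preserves total degree, so it induces an isomorphism of filtered vector spaces $\mathbb{R}_r[x,y] \cong \mathbb{R}_r[u,v]$ for every $r$. Under this substitution, $A(x,x+2y) = A(u,v)$, $A(x\pm1, x+2y-1) = A(u\pm1, v-1)$, $x+2y+1 = v+1$, $x = u$, and $2y = v-u$, so the operator becomes
$$\widetilde{\Psi}(A)(u,v) \;=\; (v+1)\,A(u,v) \;-\; u\,A(u-1,v-1) \;-\; (v-u)\,A(u+1,v-1).$$
It suffices to prove that $\widetilde{\Psi}$ is a degree-preserving bijection on $\mathbb{R}_r[u,v]$ with the same constant-term property.

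The first step is to show that $\widetilde{\Psi}$ does not raise degree. By linearity this reduces to a monomial $A = u^a v^b$ with $a+b=r$: the would-be degree-$(r+1)$ contributions from the three summands are $u^a v^{b+1}$, $u^{a+1} v^b$, and $u^a v^{b+1} - u^{a+1} v^b$ respectively, and they sum to zero. Hence $\widetilde{\Psi}$ restricts to an endomorphism of each $\mathbb{R}_r[u,v]$.

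The next step, which I expect to be the main computation, is to extract the induced map on the top graded piece. Expanding $(u\pm1)^a$ and $(v-1)^b$ one step below the leading term, a direct collection of the degree-$r$ contributions yields
$$\widetilde{\Psi}(u^a v^b) \;\equiv\; (2a+b+1)\,u^a v^b \;-\; a\,u^{a-1} v^{b+1} \pmod{\mathbb{R}_{r-1}[u,v]}.$$
Ordering the degree-$r$ monomials as $v^r, uv^{r-1}, u^2 v^{r-2}, \dots, u^r$, this exhibits the induced operator on $\mathbb{R}_r[u,v]/\mathbb{R}_{r-1}[u,v]$ as an upper-triangular matrix with diagonal entries $r+1, r+2, \dots, 2r+1$, none of which is zero. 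A short induction on degree using this triangular structure then shows that $\widetilde{\Psi}$ preserves the exact degree of any polynomial and is injective on each finite-dimensional $\mathbb{R}_r[u,v]$, hence bijective.

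Finally, the constant-term assertion is immediate: evaluating the original definition of $\Psi$ at $(x,y) = (0,0)$ makes the last two summands vanish and reduces the first to $A(0,0)$, so $\Psi(A)(0,0) = A(0,0)$. The only non-routine step is the computation of the degree-$r$ part of $\widetilde{\Psi}(u^a v^b)$; once that formula and its triangular shape are in place, degree preservation, injectivity, and surjectivity all fall out of linear algebra on a finite-dimensional space.
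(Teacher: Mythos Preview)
Your argument is correct and is essentially the paper's proof in different clothing: the paper computes $\Psi(x^i y^{r-i})$ and expresses the top-degree part in the basis $\beta_{r,i}=x^i(x+2y)^{r-i}$, obtaining exactly your formula $(2a+b+1)u^av^b - a\,u^{a-1}v^{b+1}$ with $u=x$, $v=x+2y$, $a=i$, $b=r-i$, and then reads off the same triangular matrix with diagonal $r+1,\ldots,2r+1$. Your explicit change of variables just makes the passage to the $\beta$-basis on the target side transparent; the computation, the triangularity, and the constant-term check coincide with the paper's.
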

\begin{proof}
It is obvious that the operator $\Psi$ is an $\mathbb{R}$-linear map over $\mathbb{R}[x,y]$.
For $0\leq i \leq r$ we have 
	\begin{align}\label{eq:psixy}
 \Psi\left(x^iy^{r-i}\right)&=(x+2y+1)x^i(x+2y)^{r-i}-x(x-1)^i(x+2y-1)^{r-i}-2y(x+1)^i(x+2y-1)^{r-i}\nonumber
\\
		&=(r+i+1)\,x^i(x+2y)^{r-i}-i\,x^{i-1}(x+2y)^{r-i+1}+K_{r,i}(x,y) 
\end{align}
for some polynomial $K_{r,i}(x,y)$ with degree at most $r-1$.
Let $\alpha_{j,i}(x,y):=x^iy^{j-i}$ and $\beta_{j,i}(x,y)=x^i(x+2y)^{j-i}$ for $0\leq i\leq j$.
Then $\{ \alpha_{j,i}: 0\leq i\leq j \leq  r\}$ and $\{ \beta_{j,i}: 0\leq i\leq j \leq  r\}$ form two bases for the $\mathbb{R}$-linear space $\mathbb{R}_r[x,y]$.
	From \eqref{eq:psixy} we have 
\begin{align}\label{eq:inverse}
&\Psi(\alpha_{r,r},\alpha_{r,r-1},\ldots,\alpha_{r,0},\alpha_{r-1,r-1},\alpha_{r-1,r-2},\ldots,\alpha_{r-1,0},\dots,\alpha_{1,1},\alpha_{1,0},\alpha_{0,0})
\\&
=
(\beta_{r,r},\beta_{r,r-1},\ldots,\beta_{r,0},\beta_{r-1,r-1},\beta_{r-1,r-2},\ldots,\beta_{r-1,0},\dots,\beta_{1,1},\beta_{1,0},\beta_{0,0}) \times M_r \nonumber
\end{align}
 where
\begin{equation*}
M_r=
  \begin{bmatrix}
    2r+1 &  &  &  &  &  &  &  &  &  \\
    -r & 2r &  &  &  &  &  &  &  &  \\
     & & \ddots &  &   &  & \resizebox{4mm}{!} 0 &  &   &  \\
     &  &  & r+1 &  &  &  &  &  &  \\
     &  &  &  & 2r-1 &  &  &  &  &  \\
     &  &  &  & -(r-1) & 2r-2 &  &  &  &  \\
		 & \resizebox{5mm}{!} *  &   &  &  &  & \ddots&  &  &   \\
     &  &   &  &  &  &  & r &  &  \\
     &  &   &  &  &  &  &  & \ddots &  \\
    0 & 0 &  \cdots & 0 & 0 & 0 & \cdots & 0 & \cdots  & 1 
  \end{bmatrix}
\end{equation*}
is an invertible lower triangular matrix with diagonal entries $\{ 2r+1,2r,\ldots,r+1,2r-1,2r-2,\ldots,r,\ldots,7,6,5,4,5,4,3,3,2,1\}$.  
For example, we have 
\[
M_2=
  \begin{bmatrix}
    5 & 0 & 0 & 0 & 0 & 0 
    \\
    -2& 4 & 0 & 0 & 0 & 0 
    \\
    0 & -1& 3 & 0 & 0 & 0
    \\
    0& -2 & 0 & 3 & 0 & 0
    \\
    -1& 1 & -1& -1& 2 & 0
    \\
    0 & 0 & 0 & 0 & 0 & 1
  \end{bmatrix}.
\]
Then $\Psi$ must be an isomorphism from the $\mathbb{R}$-linear space $\mathbb{R}_r[x,y]$ to itself.  
It is obvious that $A(x,y)$ and $\Psi\left(A(x,y)\right)$ have the same degree and constant term.
\end{proof}

Now we are ready to give the proof of Theorem \ref{th: main_size}. 
\begin{proof}[Proof of Theorem \ref{th: main_size}]
Let $k=|\la|$ and  $Q(x,y)=\Psi^{-1}(P(x,x+2y))$. Actually we will prove the following identity
\begin{align}\label{eq: main_size}
 \sum_{T \in \mathcal{OT}(\lambda,k+2n)} \mathrm{wt}_P(T) = \# \mathcal{OT}(\lambda,k+2n)\times (k+2n+1)Q(k,k+2n)
\end{align} 
for any $n\in \mathbb{Z}$ and $\la\in \mathbb{P}$ with $k+2n\geq 0$ by induction on $k+2n$.  For each oscillating tableaux 
$$
T = (\lambda^{0},\ldots,\lambda^{k+2n})\in \mathcal{OT}(\lambda,k+2n),
$$ 
	we have $\lambda^{k+2n-1}\in\ \Omega^+(\la) \cup \Omega^-(\la)$. Therefore
\begin{align}
\sum_{T \in \mathcal{OT}(\lambda,k+2n)} \mathrm{wt}_P(T) 
	&= \sum_{\la^+\in\ \Omega^+(\la)} \ \ \sum_{T^+ \in \mathcal{OT}(\lambda^+,k+2n-1)} (\mathrm{wt}_P(T^+)+P(k,k+2n)) \nonumber
\\
	&\quad +\sum_{\la^-\in\ \Omega^-(\la)}\ \ \sum_{T^- \in \mathcal{OT}(\lambda^-,k+2n-1)} (\mathrm{wt}_P(T^-)+P(k,k+2n)) \nonumber 
\\&= \sum_{\la^+\in\ \Omega^+(\la)}\ \ \sum_{T^+ \in \mathcal{OT}(\lambda^+,k+2n-1)} \mathrm{wt}_P(T^+) \nonumber
	\\
	&\quad +\sum_{\la^-\in\ \Omega^-(\la)}\ \ \sum_{T^- \in \mathcal{OT}(\lambda^-,k+2n-1)} \mathrm{wt}_P(T^-) \nonumber
	\\&\quad +P(k,k+2n)\times \# \mathcal{OT}(\lambda,k+2n).\label{eq:sum_wt}
\end{align}
  Let
$$
A(x,y):=(x+2y+1)Q(x,x+2y).
$$ 

When $k+2n=0$, it is obvious that
\begin{align}\label{eq:Pxy1}
\sum_{T \in \mathcal{OT}(\lambda,0)} \mathrm{wt}_P(T) = \# \mathcal{OT}(\lambda,0) \times A(k,n)
\end{align}
for any $\la\in \mathbb{P}$ by Lemma \ref{th:isopolynomial}. 
	
    When $k+2n\geq 1$, by the induction hypothesis, identity \eqref{eq:sum_wt}
	becomes
\begin{align*}
\sum_{T \in \mathcal{OT}(\lambda,k+2n)} \mathrm{wt}_P(T) 
&=\sum_{\lambda^+\in\ \Omega^+(\la)}A(k+1,n-1)\# \mathcal{OT}(\lambda^+,k+2n-1)   \\
	&\qquad +\sum_{\lambda^-\in\ \Omega^-(\la)} A(k-1,n)\# \mathcal{OT}(\lambda^-,k+2n-1)
\\
	&\qquad	+P(k,k+2n) \# \mathcal{OT}(\lambda,k+2n).
\end{align*}
When $n<0$, the above summation equals $0$ and thus \eqref{eq: main_size} is true. When $n\geq 0$,
by Lemma~\ref{th:la+-}, the above summation is equal to
\begin{align}\label{eq: PA1}
& \#\mathcal{OT}(\lambda,k+2n)\Bigl(\frac{2n}{k+2n}A(k+1,n-1)
+\frac{k}{k+2n}A(k-1,n)+P(k,k+2n)\Bigr).
\end{align}
By the definition of $\Psi$, we have 
\begin{align*}
&\ \  P(x,x+2y)=\Psi(Q(x,y))\\&=(x+2y+1)Q(x,x+2y)-xQ(x-1,x+2y-1)-2yQ(x+1,x+2y-1)\\
& =A(x,y)-\frac{x}{x+2y}A(x-1,y)-\frac{2y}{x+2y}A(x+1,y-1).
\end{align*}
Hence, \eqref{eq: PA1} equals 
$\# \mathcal{OT}(\lambda,k+2n) \times A(k,n)$,
which completes the proof.
\end{proof}

By Theorem \ref{th: main_size}, to evaluate the average of the weight function $P(x,y)$ for oscillating tableaux, we only need to calculate 
$\Psi^{-1}(P(x,x+2y))$. However it seems that the inverse of $\Psi$
has no explicit formula for general polynomial $P(x,y)$, except for
some special cases.

\begin{ex}\label{ex:yr}
	Let $P(x,y)=\binom{y}{r}$ where $r\in \mathbb{N}$.  By Definition \ref{def:psi} we obtain
\begin{align*}
	\Psi\left( \binom yr \right)&=(x+2y+1) \binom {x+2y}r-x\binom {x+2y-1}r-2y \binom {x+2y-1}r
	= (r+1)\binom{x+2y}{r}.
\end{align*}
Hence,
	$$Q(x,y)=\Psi^{-1}(P(x,x+2y))=\Psi^{-1}\left(\binom{x+2y}{r}\right)=\frac{1}{r+1}\binom{y}{r}.$$
Therefore by \eqref{eq: Main_size}, 
for any $n,r\in \mathbb{N}$ we have
\begin{align*}
\frac1{\# \mathcal{OT}(\lambda,|\lambda|+2n)} \sum_{T \in \mathcal{OT}(\lambda,|\lambda|+2n)}
\sum_{i=0}^{|\lambda|+2n}\binom{i}{r}
=(|\lambda|+2n+1)\times \frac{1}{r+1} \binom{|\lambda|+2n}{r}.
\end{align*} 
\end{ex}

The above identity can also be derived by direct calculation with the help of 
$$
\sum_{i=0}^{m}\binom{i}{r}=\binom{m+1}{r+1}=\frac{m+1}{r+1}\binom{m}{r}.
$$
Next we give the proofs for Corollaries \ref{th:empty_partition}, \ref{th:r=2} and \ref{th:xy}.

\begin{proof}[Proof of Corollary \ref{th:empty_partition}]
Let $\la=\emptyset$ and $P(x,y)=\binom xr$ in Theorem \ref{th: main_size}. 
	By Lemma~\ref{th:isopolynomial} and $\mathcal{OT}(\emptyset,2n)= (2n-1)!!$ we obtain that
$$  
\frac{2^n n!}{(2n+1)!} \sum_{(\lambda^{0},\la^1,\ldots,\lambda^{2n}) \in \mathcal{OT}(\emptyset,2n)} 
\sum_{i=0}^{2n}\binom{|\la^i|}{r}
$$
is a polynomial $B(n)$ of $n$ with degree at most $r$. It is obvious that $B(n)=0$ for $0\leq n\leq r-1$ and
$$
B(r)=\frac{2^r r!}{(2r+1)!} \sum_{|\la|=r}f_{\la}^2 
\binom{r}{r}
=\frac{2^r r!^2}{(2r+1)!}. 
$$
The last equality is due to the well-known formula (see \cite{Macdonald,stanley2}) $\sum_{|\la|=r}f_{\la}^2 = r!$.   Since $\deg(B(n))\leq r$, we obtain that
$B(n)=\frac{2^r \, r!^2}{(2r+1)!} \binom{n}{r}$. The proof is complete.
\end{proof}

\begin{proof}[Proofs of Corollaries \ref{th:r=2} and \ref{th:xy}]
First we have 
	\begin{align*}
		\Psi(x)&=2x-2y,\\
		\Psi(y)&=2x+4y,\\
		\Psi(xy)&=3x^2+4xy-4y^2-x+2y,\\
		\Psi(x^2)&=3x^2-4xy-x-2y,\\
		\Psi(y^2)&=3x^2+12xy+12y^2-x-2y.
	\end{align*}
	Then by linearity,
    $$\Psi\left(\frac{6x^2+3xy+y^2+2x+3y}{30}\right)=x^2.$$
  In Theorem \ref{th: main_size}, let $P(x,y)=x^2$.  Then 
$$Q(x,y)=\Psi^{-1}(P(x,x+2y))=\Psi^{-1}\left(x^2\right)=\frac{6x^2+3xy+y^2+2x+3y}{30}.$$
Therefore Corollary \ref{th:r=2} holds
by \eqref{eq: Main_size}.
Also we have  
	$$\Psi\left(\frac{3xy+y^2+2x}{12}\right)=x(x+2y).$$
In Theorem \ref{th: main_size}, let $P(x,y)=xy$.  Then 
	$$Q(x,y)=\Psi^{-1}(P(x,x+2y))=\Psi^{-1}\left(x(x+2y)\right)=\frac{3xy+y^2+2x}{12}.$$
By \eqref{eq: Main_size} we obtain Corollary \ref{th:xy}.
\end{proof}

\section{Proofs of the asymptotic formulas}
In this section, we will prove Theorem \ref{th: asy}. 

\begin{proof}[Proof of Theorem \ref{th: asy}]  
Let $I=\{(r,i)\in \mathbb{N}\times\mathbb{N}:i\leq r\}$. For two elements in $I$, we say that  $(r',i')\preceq (r,i)$ if (1) $r'<r$ or (2) $r'=r$ and $i'\leq i$. It is easy to check that '$\preceq$' is a partial order relation on the set $I$.
Let $M_r,\ \alpha_{j,i}$ and $\beta_{j,i}$ be the same as in the proof of Lemma \ref{th:isopolynomial}.
 For $0\leq i\leq  r$, by \eqref{eq:inverse} 
 we obtain
 \begin{align}\label{eq:inverse*}
&
(\beta_{r,r},\beta_{r,r-1},\ldots,\beta_{r,0},\beta_{r-1,r-1},\beta_{r-1,r-2},\ldots,\beta_{r-1,0},\dots,\beta_{1,1},\beta_{1,0},\beta_{0,0})
\\&=
\Psi(\alpha_{r,r},\alpha_{r,r-1},\ldots,\alpha_{r,0},\alpha_{r-1,r-1},\alpha_{r-1,r-2},\ldots,\alpha_{r-1,0},\dots,\alpha_{1,1},\alpha_{1,0},\alpha_{0,0}) \times M_{r}^{-1}
  \nonumber
\end{align}
 where
$M_{r}^{-1}$
is a lower triangular matrix with diagonal entries $\{ (2r+1)^{-1},(2r)^{-1},\ldots,(r+1)^{-1},(2r-1)^{-1},(2r-2)^{-1},\ldots,r^{-1},\ldots,7^{-1},6^{-1},5^{-1},4^{-1},5^{-1},4^{-1},3^{-1},3^{-1},2^{-1},1^{-1}\}$. 
 Then there exist some constants $ m_{r',i'}^{r,i}\in\mathbb{R}$ such that
\begin{align} \label{eq:3.2}
\beta_{r,i}=\Psi\left( \sum_{(r',i')\preceq (r,i)} m_{r',i'}^{r,i} \alpha_{r',i'}\right),
\end{align}
and
\begin{align} 
\label{eq:3.1} m_{r,0}^{r,0}=\frac1{r+1}.
\end{align}

When $i>0$, the identity \eqref{eq:psixy} implies 
\begin{align} \label{eq:3.3}
 \Psi\left(\alpha_{r,i}\right)&
		-(r+i+1)\,\beta_{r,i}+i\,\beta_{r,i-1}\in \mathbb{R}_{r-1}[x,y].
\end{align}
By \eqref{eq:3.2} and \eqref{eq:3.3} we obtain
\begin{align} 
\Psi\left(\alpha_{r,i}-(r+i+1)\sum_{(r',i')\preceq (r,i-1)} m_{r',i'}^{r,i}\alpha_{r',i'} +i\sum_{(r',i')\preceq (r,i-1)} m_{r',i'}^{r,i-1} \alpha_{r',i'} \right)\in \mathbb{R}_{r-1}[x,y],
\end{align}
and therefore
\begin{align} \label{eq:3.6}
\alpha_{r,i}-(r+i+1)\sum_{(r',i')\preceq (r,i-1)} m_{r',i'}^{r,i} \alpha_{r',i'} +i\sum_{(r',i')\preceq (r,i-1)} m_{r',i'}^{r,i-1} \alpha_{r',i'} \in \mathbb{R}_{r-1}[x,y].
\end{align}
	Then the coefficient of $ \alpha_{r,0}$ in \eqref{eq:3.6} must be zero, i.e.,
$$ (r+i+1)\,m^{r,i}_{r,0}=i\,m_{r,0}^{r,i-1}.$$
Finally by \eqref{eq:3.1} and induction we obtain 
\begin{align} \label{eq:3.4} 
m^{r,i}_{r,0}=\frac{r!\, i!}{(r+i+1)!}
\end{align}
for any $0\leq i\leq r$. 
This means that, for a given $k$,  
\begin{align*}
\Psi^{-1}\left(x^i (x+2y)^j\right)\Big|_{x=k, y=2n}
	&=
\Psi^{-1}\left(\beta_{i+j,i}\right)\Big|_{x=k, y=2n}
\\
	&\sim\ 
m^{i+j,i}_{i+j,0}\, (k+2n)^{i+j}
\\ 
	&\sim\ 
\frac {i!(i+j)!(2n)^{i+j}}{(2i+j+1)!}
\end{align*}
when $n\rightarrow \infty$.
Then \eqref{eq: asy2} holds by Theorem \ref{th: main_size}.

\medskip

On the other hand, when $i>0$, by comparing the coefficients of $ \alpha_{r,i'}\ (0\leq i'\leq i)$ in  \eqref{eq:3.6}, we have 
\begin{align} 
1-(r+i+1)\sum_{i'=0}^{i}  m_{r,i'}^{r,i}  +i\sum_{i'=0}^{i-1}  m_{r,i'}^{r,i-1} =0,
\end{align}
or equivalently, 
\begin{align} 
\sum_{i'=0}^{i}  m_{r,i'}^{r,i}  = \frac{1+i\sum_{i'=0}^{i-1}  m_{r,i'}^{r,i-1}}{r+i+1}.
\end{align}
Therefore by \eqref{eq:3.1} and induction we obtain 
\begin{align} \label{eq:3.7} 
\sum_{i'=0}^{i}  m_{r,i'}^{r,i}  =\frac{1}{r+1}
\end{align}
for any $0\leq i\leq r$. 
This means that, for a given $n$, 
\begin{align*}
\Psi^{-1}\left(x^i (x+2y)^j\right)\Big|_{x=k, y=2n}
	&=
\Psi^{-1}\left(\beta_{i+j,i}\right)\Big|_{x=k, y=2n}
\\
	&\sim\ 
\sum_{i'=0}^{i} m^{i+j,i}_{i+j,i'}\,k^{i'}\, (k+2n)^{i+j-i'}
\\
	&\sim\ 
\frac {k^{i+j}}{(i+j+1)}
\end{align*}
when $k\rightarrow \infty$.
Then \eqref{eq: asy1} holds by Theorem \ref{th: main_size}.
\end{proof}

\section{Remarks and discussions} 

(1) The operator $\Psi$ defined in Section \ref{sec:size} and its inverse $\Psi^{-1}$ can be realized via a computer algorithm, as shown by the next program 
written for the computer algebra system {\tt Sage} \cite{sage-combinat}.

\smallskip
\hrule
\begin{verbatim}
var('y')
def Psi(P):
  z=x+2*y
  def A(a,b): return P.subs(x=a).subs(y=b)
  return expand((z+1)*A(x,z)-x*A(x-1,z-1)-2*y*A(x+1,z-1))
def InvPsi(P):
  def Cf(P,ij): return P.coefficient(x,ij[0]).coefficient(y,ij[1])
  d=PolynomialRing(RR, 'x,y')(P).total_degree()+1
  V=[(i,j) for i in range(d) for j in range(d) if i+j<d]
  M=[[Cf(Psi(x^a*y^c), b) for b in V] for (a,c) in V]
  R=1/matrix(M)*matrix([[x^a*y^c] for (a,c) in V])
  return sum([Cf(P,V[j])*R[j][0]  for j in range(len(V))])
\end{verbatim}
\hrule

\medskip

Here are two examples to verify the functions {\tt Psi} and {\tt InvPsi}: 

\begin{verbatim}
     sage: Psi(x^2)
       3*x^2 - 4*x*y - x - 2*y
     sage: InvPsi(x^2+2*x*y)
       1/4*x*y + 1/12*y^2 + 1/6*x
\end{verbatim}
\smallskip

(2)  
As noted in Example \ref{ex:yr}, when $P(x,y)=\binom yr$, we have $\Psi^{-1}\left(P(x,x+2y)\right)=\frac{1}{r+1}\binom yr$. So what is $\Psi^{-1}\left(P(x,x+2y)\right)$ when $P(x,y)=\binom xr$? We can not find nice explicit formula for it. Instead we obtain 
\begin{align}\label{eq: xr}
\Psi^{-1}\left(\binom xr\right)\Big|_{x=0, y=2n}= \frac{r!^2\,2^r}{(2r+1)!} \binom{n}{r}
\end{align}
by comparing the identities \eqref{eq: Main_size} and \eqref{eq:size_formula}. 
Is it possible to find a direct proof for \eqref{eq: xr} by the definition of $\Psi$ without Theorem \ref{th: main_size}?

\medskip

(3) The technique developed in the present paper can be used  for studying 
other problems on the averages of weight functions for oscillating tableaux. 
For each box $\square$ in the Young diagram of the partition $\la$, let $h_\square$ and $c_\square$ be its hook length and content respectively (see \cite{Macdonald,stanley2}). 
In a preparing paper,  by applying results from the study of difference operators  on functions of partitions \cite{hanxiong2,hanxiong1,hanxiong3},  
we will establish the following two explicit formulas with very complicated proofs 
for the average
 weights  related to hook lengths and contents:
\begin{align*}
\frac{2^n n!}{(2n+1)!} \sum_{(\lambda^{0},\ldots,\lambda^{l}) \in \mathcal{OT}(\emptyset,2n)} 
\sum_{i=0}^{2n} \sum_{\square\in\lambda^i}\prod_{1\leq j\leq
r}(h_{\square}^2-j^2)
=\frac{(2r)!\,2^r}{(2r+3)(r+1)!} \binom{n}{r+1}
\end{align*}
and
\begin{align*}
\frac{2^n n!}{(2n+1)!} \sum_{(\lambda^{0},\ldots,\lambda^{l}) \in \mathcal{OT}(\emptyset,2n)} 
\sum_{i=0}^{2n} \sum_{\square\in\lambda^i}\prod_{0\leq j\leq
r-1}(c_{\square}^2-j^2)
=\frac{r!\,2^r}{(2r+1)(2r+3)} \binom{n}{r+1}.
\end{align*} 
The above two identities can be seen as analogues of the Okada-Panova hook length formula~\cite{panova}  
$$
\frac1{n!}  \sum_{|\lambda|=n} f_\la^2  {\sum_{\square\in\lambda}\prod_{1\leq j\leq
r}(h_{\square}^2-j^2)} = \frac{(2r)!(2r+1)!}{r! (r+1)!^2}\binom{n}{r+1}
$$
and the Fujii-Kanno-Moriyama-Okada content formula \cite{fkmo} 
$$
\frac1{n!}  \sum_{|\lambda|=n} f_\la^2  {\sum_{\square\in\lambda}\prod_{0\leq j\leq
r-1}(c_{\square}^2-j^2)}=
    \frac{(2r)!}{(r+1)!} \binom{n}{r+1}.
$$

The above results suggest that there are several kinds of weight functions of oscillating tableaux whose averages have nice expressions such as polynomials.  We want to summarize known results and find a theory for the averages of more general weight functions of oscillating tableaux in a preparing paper.

\medskip

(4) A \emph{skew oscillating tableau} is a sequence of partitions $T = (\lambda^{0},\lambda^{1},\ldots,\lambda^{l})$ such that $|\la^i/\la^{i+1}|=1$ or $|\la^{i+1}/\la^{i}|=1$ for each $0\leq i \leq l-1$ without the condition that $\la^0=\emptyset$. The enumeration of skew oscillating tableaux was obtained by Roby \cite{roby}. So it is natural to ask the following question: Is there a generalization of Theorem \ref{th: main_size} for skew oscillating tableaux? As pointed out in \cite{HZ14}, the computation suggests that there
are no simple formulas for skew oscillating tableaux.

\medskip

(5) A \emph{strict partition} is a partition whose parts are distinct to each other. We can define  strict oscillating tableaux in a similar way. It will  be interesting to find similar results for the enumerations and average weights for strict oscillating tableaux.

\medskip

(6) As explained in \cite{HZ14}, oscillating
tableaux are closely related to perfect matchings. The results on this topic can be found in \cite{bloom2,chen,chen2,kasraoui}. It will be interesting to apply our results to perfect matchings in  future.

\section*{Acknowledgments}
The second author acknowledges support from Grant P2ZHP2\_171879 of the Swiss National Science Foundation.

\end{document}